\newcounter{commentcounter}
\theoremstyle{plain}
\newtheorem{theorem}{Theorem}[section]
\newtheorem{lemma}[theorem]{Lemma}
\theoremstyle{definition}
\newtheorem{example}[theorem]{Example}
\newtheorem{remark}[theorem]{Remark}
\newtheorem{question}[theorem]{Question}
\newcommand{\R}{\mathbb{R}}
\newcommand{\N}{\mathbb{N}}
\thanks{FF was supported by NSF grant DMS 1855591, NSF CAREER grant DMS 2042428, and a Sloan Research Fellowship.  MH was supported by NSF grant DMS 1926686.  WP was supported by NSF grant DMS 2054503.}
\title[Unit sphere fibrations]{Unit sphere fibrations in Euclidean space}
\begin{document}



\author{Daniel Asimov}
\address[DA]{}
\email{asimov@msri.org}

\author{Florian Frick}
\address[FF]{Dept.\ Math.\ Sciences, Carnegie Mellon University, Pittsburgh, PA 15213, USA}
\email{frick@cmu.edu} 

\author{Michael Harrison}
\address[MH]{Institute for Advanced Study, Princeton, NJ 08540, USA}
\email{mah5044@gmail.com} 

\author{Wesley Pegden}
\address[WP]{Dept.\ Math.\ Sciences, Carnegie Mellon University, Pittsburgh, PA 15213, USA}
\email{wes@math.cmu.edu}


\maketitle
\date{\today}

\begin{abstract}
We show that if an open set in $\R^d$ can be fibered by unit $n$-spheres, then $d \geq 2n+1$, and if $d = 2n+1$, then the spheres must be pairwise linked, and $n \in \left\{ 0, 1, 3, 7 \right\}$.   For these values of $n$, we construct unit $n$-sphere fibrations in $\R^{2n+1}$.
\end{abstract}

\section{Motivation and statement of results}

A classical theorem states that $\R^3$ can be decomposed as a union of disjoint unit circles.  This statement may be folklore, but it seems to have been popularized by Conway and Croft, who gave a nonconstructive proof using transfinite induction \cite{Conway}.   Since then, a number of fascinating questions, regarding the possibility of decomposing a fixed space as the union of disjoint copies of another space, have been studied from set theoretic, topological, and geometric perspectives.   A collection of decomposition questions appear in an expository article by Martin Gardner \cite{Gardner}, whereas a collection of geometric constructions using transfinite induction can be found in \cite{Komjath}.   Similar questions appear every so often on MathOverflow; see \cite{of4,of3,of2,of1}.  It seems to be unknown whether there exists a Borel decomposition of $\R^3$ into unit circles; see \cite{of5}. 

With different topological or geometric constraints, versions of this question have appeared in a wide range of  articles \cite{BankstonMcGovern,Bass, Bing4,Bing5,Bing2,Bing3,Bing1,Cobb,JonssonWastlund,Moussong,Wilker}, and several explicit constructions have appeared.   For example, Szulkin constructed an explicit covering of $\R^3$ using geometric (round) circles of varying radii, which fails to be a foliation only on the union of a countable discrete set of concentric circles \cite{Szulkin}, whereas foliations of $\R^3$ by topological circles were thoroughly studied by Vogt \cite{Vogt89}.  Bankston and Fox \cite{Bankston}, generalizing a construction of Kakutani, gave a decomposition of $\R^{n+2}$ by tamely embedded, unknotted, pairwise unlinked (topological) $n$-spheres, but it seems unknown whether there exists any decomposition of $\R^{n+2}$ by unit $n$-spheres for $n > 1$.

Here we address the existence question for \emph{continuous} unit sphere decompositions of open subsets $E$ in Euclidean space $\R^d$.  A decomposition of $E$ by unit $n$-spheres is called continuous if the map taking $p \in E$ to its containing sphere is continuous; that is, if the sphere centers and their normal spaces vary continuously with $p$.   In this case we say that $E$ is fibered by unit $n$-spheres.  Global fibrations of $\R^d$ by unit spheres cannot exist for topological reasons, as can be seen from the long exact sequence associated to the fibration.  However, the existence question for local fibrations, has, to the authors' surprise, turned out to be nontrivial.

A simple example of a unit circle fibration can be described as follows.  Consider an open torus $T \in \R^3$ with major and minor radii both equal to $1$ (the boundary of $T$ has a singularity in the center).  The interior of this torus may be foliated by a collection of tori $T_r$, each with major radius $1$, but with minor radii $r$ ranging from $0$ to $1$.   Now each individual torus admits two foliations (a left-handed one and a right-handed one) by \emph{Villarceau circles}.  The radius of a Villarceau circle is equal to the major radius of the torus, and hence all such circles have radius $1$.  Choosing the right-handed foliation on each torus yields a collection of pairwise linked unit circles which fill the interior of $T$.  See Figure \ref{fig:unitcirclefib}.

\begin{figure}[ht!]
\centerline{
\includegraphics[width=4in]{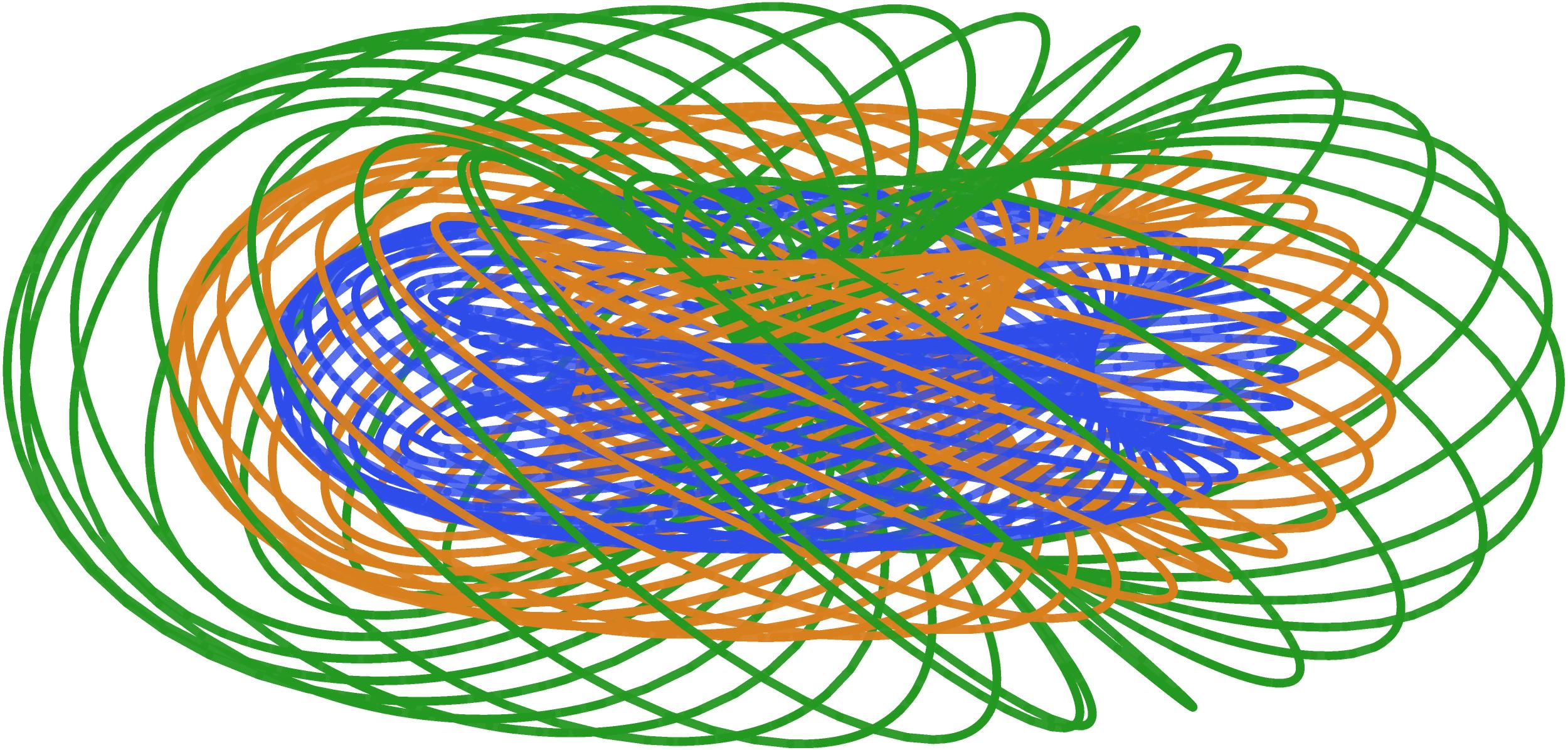}
}
\caption{Fibration of a toroidal region in $\R^3$ by linked unit circles}
\label{fig:unitcirclefib}
\end{figure}

\begin{remark}
We note that the stereographic projection of the Hopf fibration of $S^3$ by unit circles produces an image which looks very similar to that of Figure \ref{fig:unitcirclefib}.  While the circles there are also Villarceau circles on tori, they are not unit circles.
\end{remark}

In Section \ref{sec:ex} we construct explicit unit $n$-sphere fibrations in $\R^{2n+1}$, for $n \in \left\{1, 3, 7 \right\}$.  When $n=1$ this construction produces the fibered torus described above.   The dimensions of our construction are sharp, as shown by our main result:

\begin{theorem}
\label{thm:main}
Suppose that there exists a unit $n$-sphere fibration of an open subset $E \subset \R^d$.  Then $d \geq 2n+1$, and if $d=2n+1$, then $n \in \left\{0, 1, 3, 7 \right\}$.
\end{theorem}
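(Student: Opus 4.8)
The plan is to extract, from a unit $n$-sphere fibration, a continuous family of tangent/normal data that produces the kind of topological structure forbidden unless $n \in \{0,1,3,7\}$. Let me think about what a fibration gives us. At each point $p \in E$, there is a unit $n$-sphere $S_p$ through $p$, with a center $c(p)$ and a normal $(d-n-1)$-plane (the orthogonal complement of the affine span of $S_p$). The continuity assumption says $c$ and the tangent/normal spaces vary continuously with $p$.

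\textbf{Dimension bound $d \geq 2n+1$.} First I would analyze the linear-algebraic constraint at a single sphere. Each sphere $S_p$ spans an $(n+1)$-dimensional affine subspace $A_p$. The crucial observation is that nearby spheres cannot intersect $S_p$ (fibers are disjoint), and continuity forces the spanning subspaces $A_q$ for $q$ near $p$ to move in a controlled way. I expect the key local claim to be that the map assigning to each point of $S_p$ the tangent space of its fiber, together with how the center moves, yields a bundle monomorphism from $TS^n$ into a trivial bundle whose rank is governed by $d$. Concretely, differentiating the center map along the sphere and using disjointness, I would show that the $(n+1)$-plane directions together with the $n$-dimensional tangent data to $S^n$ must inject into $\R^d$ in a way that needs at least $2n+1$ dimensions. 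The cleanest route is to show that the normal bundle of $S^n$ inside the fibration structure combines with $TS^n$ to force $d - (n+1) \geq n$, i.e.\ $d \geq 2n+1$; the linking statement (which the abstract records but the theorem omits) should come out of the same computation as a positivity/nondegeneracy condition.

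\textbf{The sharp case $d = 2n+1$ and the constraint $n \in \{0,1,3,7\}$.} This is where I expect the Adams Hopf-invariant-one theorem to enter, so the real work is manufacturing a parallelizability or $H$-space structure on $S^n$ from the fibration. In the extremal dimension, the inequality above becomes an equality, so the injection of $TS^n$ into the trivial $\R^d$-bundle over $S^n$ must be as tight as possible: the normal data to the fiber $S_p$ (within $\R^{2n+1}$) must realize a \emph{complementary} $n$-plane field along $S^n$ that is everywhere transverse to $TS^n$. I would argue that continuity of the fibration near a single fiber produces a continuous field of $n$-frames, or equivalently a trivialization of $TS^n$, forcing $S^n$ to be parallelizable; by Bott--Milnor--Kervaire (equivalently Adams) this happens only for $n \in \{0,1,3,7\}$. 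The precise mechanism I would pursue: take a tubular neighborhood of one fiber $S_0 \cong S^n$, and use the centers of neighboring fibers to define, at each $x \in S_0$, a distinguished nonzero normal vector (the direction in which the center of the fiber through $x$ displaces); disjointness and the equality $d = 2n+1$ should make these vectors span a trivial complement, and combined with the radial direction this trivializes the normal bundle of $S^n$ in $\R^{2n+1}$, which by the standard argument forces parallelizability.

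\textbf{Main obstacle.} The hard part will be the second step: rigorously converting the \emph{local, first-order} continuity data of the fibration into a \emph{global} trivialization of $TS^n$ (or an $H$-space structure), rather than just a pointwise transversality statement. In particular I must ensure the frame field I construct is globally continuous and nowhere degenerate over all of $S^n$, handling the possibility that the ``center displacement'' vector vanishes or fails to be independent from $TS^n$ at some points. I anticipate needing to exploit the equality $d = 2n+1$ decisively here, since in higher codimension there is extra room and no parallelizability obstruction arises. A secondary technical point is to justify differentiability or at least a $C^0$ substitute for the tangent-space variation, so that the bundle maps are honestly continuous; I would phrase everything in terms of continuous frames and Gauss-type maps to avoid smoothness assumptions the hypothesis does not provide.
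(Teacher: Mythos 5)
Your proposal has genuine gaps in both halves. For the bound $d \geq 2n+1$, you propose a local, first-order argument (a bundle monomorphism obtained by differentiating the center and tangent data along one fiber), but such an argument cannot succeed as described because it never actually uses the hypothesis that the spheres have \emph{unit} radius: disjointness plus continuity of tangent/normal data imposes no local dimension restriction by itself (concentric spheres of varying radii, or parallel $n$-planes foliating $\R^{n+1}$, have disjoint fibers with perfectly coherent tangent data). The paper's argument is global and uses the unit condition decisively: it takes the sphere $S$ concentric with a fiber $C$ but of radius $1+\varepsilon$, maps each $x \in S$ to the point where its fiber meets the boundary $\partial D$ of a small transverse disk, and applies Borsuk--Ulam; antipodal points of $S$ are at distance $2+2\varepsilon > 2$, hence lie on distinct and therefore disjoint unit fibers, which is the contradiction when $d \le 2n$. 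Running the same argument in $d=2n+1$ under the assumption that fibers are unlinked with $C$ shows the image of the map misses the point $\partial D \cap B$, so Borsuk--Ulam applies again --- this is precisely how the paper obtains the linking conclusion that you correctly sense is needed but do not derive.

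For the extremal case, your plan to extract a parallelization of $S^n$ from ``center displacement'' vectors is ill-defined as stated: for $x$ on the fiber $S_0$, the fiber through $x$ \emph{is} $S_0$, so its center does not displace; to get a nonzero normal vector you would need to differentiate the center map in a transverse direction, which requires choosing such a direction at each $x$ (circular) and a differentiability the hypotheses do not supply. You also acknowledge you cannot rule out degeneracies of the resulting frame field. The paper avoids all of this: having established that the fibers are pairwise \emph{linked}, it assigns to each fiber the $n$-plane through its center orthogonal to its affine span and proves that linkedness forces these planes to be pairwise skew, yielding a skew fibration of an open subset of $\R^{2n+1}$ by affine $n$-planes; by the Ovsienko--Tabachnikov classification this exists only when $n+1 = \rho(n+1)$, i.e.\ $n \in \{0,1,3,7\}$. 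Any nondegeneracy statement of the kind your parallelizability argument needs would in effect require the linking dichotomy, which is exactly what your first step fails to deliver.
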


If there exists a unit $n$-sphere fibration of an open region $E \subset \R^d$, then there exists a unit $n$-sphere fibration of $E \times \R \subset \R^{d+1}$, by ``stacking'' unit $n$-sphere fibrations of $E$.  Thus it would be interesting to determine:

\begin{question}
\label{ques:main} Given $n \in \N$, do there exist $d$ and an open subset $E \subset \R^d$ such that $E$ is fibered by unit $n$-spheres?  If so, what is the minimum such dimension $d$?
\end{question}

The answer to this question is known for two other types of geometric fibrations, and it would not be surprising if the answer to Question \ref{ques:main} matches one of these described below:

A \emph{great sphere fibration} is a sphere bundle with total space $S^d$ and fibers which are great $n$-spheres.   The standard example is the Hopf fibration of $S^3$ by unit circles, which arises by choosing an orthogonal complex structure on $\R^4$ and intersecting $S^3$ with all of the complex lines.   Similar constructions with quaternions or octonions yield Hopf fibrations with fibers $S^3$ or $S^7$.  Algebraic topology imposes strong restrictions on the possible dimensions of great sphere fibrations; in particular the fibers must have dimension $0$, $1$, $3$, or $7$.  With this in mind, it would not be surprising if only these $n$ could serve as the fiber dimension of a unit sphere fibration.  See \cite{GluckWarner,GluckWarnerYang,GluckWarnerZiller,McKay} for more information about great sphere fibrations.

A \emph{skew fibration} is a vector bundle with total space $\R^d$ and fibers which are pairwise skew affine $n$-planes.   
In \cite{OvsienkoTabachnikov}, Ovsienko and Tabachnikov showed that a skew fibration of $\R^d$ by $n$-planes exists if and only if $n \leq \rho(d-n) - 1$, where $\rho$ is the Hurwitz--Radon function, defined as follows: Decompose $q \in \N$ as the product of an odd number and $2^{4a+b}$, where $0 \leq b \leq 3$, then $\rho(q) = 2^b + 8a$.  It follows from unboundedness of $\rho$ that for every $n$ there exists $d$ such that $\R^d$ may be fibered by skew affine $n$-planes.  With this in mind, it would not be surprising if every $n$ could serve as the fiber dimension of a unit sphere fibration.   See \cite{Harrison,Harrison2,Harrison4,OvsienkoTabachnikov2} for more information about skew fibrations.

\begin{remark}
\label{rem:skew}
In the proof of Theorem \ref{thm:main}, we will demonstrate a correspondence between linked unit sphere fibrations and skew fibrations.  In particular, we will use the fact that skew fibrations of $\R^{2n+1}$ by $n$-planes exist if and only if $n + 1 = \rho(n+1)$, which occurs if and only if $n \in \left\{ 0, 1, 3, 7 \right\}$.  
\end{remark}

Although we have found some relationships among unit sphere fibrations, great sphere fibrations, and skew fibrations, we have found that the techniques used to study unit sphere fibrations are somewhat different from those used to study the other types of fibrations.  For example, the collection of $n$-spheres in a great sphere fibration of $S^d$ corresponds to a submanifold of $\operatorname{Gr}_{n+1}(d+1)$, and an important component of studying great sphere fibrations is understanding the topology of the Grassmann manifold.  Similarly, the study of skew fibrations requires understanding the topology of the affine Grassmann and spaces of nonsingular bilinear maps.  By contrast, geometry plays a large role in the study of unit sphere fibrations, and it seems unlikely that their study can be completely reduced to questions of topology or linear algebra.  

\section{Proof of the main result}

The proof of the main result uses the notion of linkedness for unit $n$-spheres.  Recall that two disjoint topological $n$-spheres $S_1$ and $S_2$ in $\R^d$ are \emph{linked} if $S_1$ is nontrivial as an element of $\pi_n(\R^d - S_2)$.   Linking of topological $n$-spheres can potentially occur in dimensions $n+2 \leq d \leq 2n+1$,  but linking for unit $n$-spheres is much more restrictive.

\begin{lemma}
\label{lem:linked}
Let $S_1$ and $S_2$ be two disjoint unit $n$-spheres in $\R^d$, $n+2 \leq d \leq 2n+1$, and let $P_i$ be the affine $(n+1)$-plane containing $S_i$.  The spheres $S_1$ and $S_2$ are linked if and only if $S_2$ intersects $P_1$ exactly once in each connected component of $P - S_1$.  In particular, two unit $n$-spheres can only be linked in dimension $d = 2n+1$.
\end{lemma}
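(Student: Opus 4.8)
The plan is to detect linking by the homotopy class of a single explicit ``defining map'' of $S_2$ restricted to $S_1$, and to read everything off from how the round geometry constrains that map. After translating so that the plane $P_2 = c_2 + V_2$ has $c_2 = 0$ and is linear, write $\R^d = V_2 \oplus V_2^\perp$ and $x = (y,z)$. The map $g(x) = (|y|^2 - 1, \, z) \in \R \times V_2^\perp \cong \R^{d-n}$ vanishes exactly on $S_2$, so it restricts to $g \colon \R^d \setminus S_2 \to \R^{d-n} \setminus \{0\} \simeq S^{d-n-1}$, which records the linking class (it realizes the standard homotopy equivalence for the unknotted round sphere $S_2$ when $d \geq n+2$). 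Thus $S_1$ and $S_2$ are linked if and only if $\Phi := g|_{S_1} \colon S^n \to S^{d-n-1}$ is essential; since $S_1 \cap S_2 = \varnothing$, the map $\Phi$ is nowhere zero and is well defined.

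The heart of the argument is a factorization of $\Phi$. Parametrizing $S_1$ by $v \in S(V_1) = S^n$ via $x = c_1 + v$ (where $P_1 = c_1 + V_1$), a short computation using $|v| = 1$ gives, before normalizing, $\Phi(v) = \bigl(|y_0|^2 + 2\langle y_0, v\rangle - |Lv|^2, \, z_0 + Lv\bigr)$, where $y_0 = \operatorname{proj}_{V_2} c_1$, $z_0 = \operatorname{proj}_{V_2^\perp} c_1$, and $L = \operatorname{proj}_{V_2^\perp}|_{V_1}$, whose kernel is $V_1 \cap V_2$. Splitting $v = v' + v''$ along $V_1 = (V_1 \cap V_2) \oplus (V_1 \cap V_2)^\perp$, the map depends on $v'$ only through the single linear functional $\langle y_0', v'\rangle$, with $y_0' = \operatorname{proj}_{V_1 \cap V_2} y_0$. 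Writing $S^n = S(V_1\cap V_2) * S\bigl((V_1\cap V_2)^\perp\bigr) = S^{k-1} * S^{n-k}$ with $k = \dim(V_1 \cap V_2)$, the map $\Phi$ factors through $h * \mathrm{id}$, where $h \colon S^{k-1} \to [-1,1]$ is the (rescaled) height function $\omega \mapsto \langle y_0', \omega\rangle$.

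When $k \geq 2$, the sphere $S^{k-1}$ is connected and $h$ surjects onto an interval (or is constant, in which case $\Phi$ already factors through a ball), so $\Phi$ factors through $[-1,1] * S^{n-k}$, which is contractible, being a join with an interval. Hence $\Phi$ is nullhomotopic and the spheres are \emph{unlinked}. This also yields the ``in particular'': $S_2 \cap P_1 = S_2 \cap (P_1 \cap P_2)$ is a round sphere of dimension $k-1$ (or a point, or empty), hence connected, so it can never meet $P_1$ exactly once in each component of $P_1 \setminus S_1$; the right-hand condition therefore forces $S_2 \cap P_1$ to be exactly two points, i.e. $k = 1$, and since always $k \geq 2n+2-d$, this forces $d = 2n+1$.

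It remains to treat $k = 1$, where $A = P_1 \cap P_2$ is a line $\ell$, the set $S_2 \cap P_1 = \ell \cap S_2$ has at most two points, and $\deg \Phi = \operatorname{lk}(S_1, S_2)$ equals the signed count of points of $\ell \cap S_2$ lying inside $S_1$, i.e. in the flat ball $B_1 \subset P_1$ bounded by $S_1$. The crux is a sign computation: writing the two intersection points as $c_2 + m \pm t_0 w$ (with $w$ the unit direction of $\ell$ and $m \perp w$), the local intersection sign at each point is $\operatorname{sign}\langle w, p - c_2\rangle$ times a constant independent of the point, hence $\pm t_0$ times that constant — opposite at the two points. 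Consequently the two points always contribute opposite signs, so $\operatorname{lk}$ is nonzero precisely when exactly one of them lies inside $S_1$, that is, when $S_2$ meets $P_1$ exactly once in each component of $P_1 \setminus S_1$. I expect the main obstacle to be this sign computation together with the verification that $g$ faithfully records the linking class; the join-contractibility in the case $k \geq 2$ is the conceptual step forcing the dimension restriction.
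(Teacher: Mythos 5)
Your proposal is correct, but it takes a genuinely different route from the paper's. The paper argues geometrically with almost no computation: for the forward direction it contracts $S_2$ in the complement of the flat disk bounded by $S_1$ when $S_2$ misses that disk (and argues similarly for the exterior), and it rules out every intersection pattern other than ``two points, one per component'' by noting that $S_2\cap P_1$ is a round sphere or a point, hence connected unless it is an $S^0$, and a connected set meeting both components of $P_1\setminus S_1$ would cross $S_1$; the dimension restriction follows since $d\le 2n$ forces $\dim(P_1\cap P_2)\ge 2$. The converse is handled by an explicit rotation of $P_2$ onto a standard generator. Your join factorization plays the role of the paper's connectivity argument (your $k\ge 2$ is exactly their ``the intersection is at least a circle''), while your degree and sign computation replaces both the Seifert-disk contraction and the rotation homotopy. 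What you gain is a uniform, fully rigorous converse: the opposite-sign count at the two points of $\ell\cap S_2$ is more convincing than ``it is not difficult to see, by rotating $P_2$.'' What you pay is the homotopy-theoretic input you rightly flag, namely that $g_*$ is an isomorphism $\pi_n(\R^d\setminus S_2)\cong\pi_n(S^{d-n-1})$, which uses $\pi_n(S^{d-1})=0$ and the absence of Whitehead products in this range; the paper's direct nullhomotopies avoid this. Two points to tighten: in the $k\ge 2$ case, state explicitly that the map out of $[-c,c]*S^{n-k}$ is nonvanishing because $h*\mathrm{id}$ is surjective (every point of the join comes from a point of $S_1$, where $g\ne 0$) --- this is precisely where $k=1$ escapes the argument; and note that a tangential intersection with $|\ell\cap S_2|=1$ contributes local degree $0$ (perturb $|y|^2-1$ by $\pm\varepsilon$), so that fewer than two intersection points also yields $\deg\Phi=0$.
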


\begin{proof}
If $S_2$ does not intersect $P_1$ inside $S_1$, then $S_2$ lives in the complement of the unit disk $D_1$ in $\R^d$ and hence can be contracted to a point inside $\R^d - S_1$.  Similarly, $S_2$ must intersect $P_1$ outside $S_1$. 

The intersection $S_2 \cap P_1$ is a sphere or a single point.   Therefore if $S_2$ intersects $P_1$ in more than two points, at least one in each component, then it intersects $P_1$ in at least a circle, which contains points in each connected component of $P_1 - S_1$ and hence intersects $S_1$.

Similarly, if $d < 2n$, then $P_1$ and $P_2$ must intersect in at least a $2$-dimensional space, and so if $S_2$ intersects $P_1$ in one point in each component,  it must intersect $P_1$ in at least a circle.   Therefore if $S_2$ intersects $P_1$ exactly once in each connected component, then $d= 2n+1$, and it is not difficult to see, by rotating $P_2$, that there is a homotopy bringing $S_2$ to the position of a standard generator for $\pi_n(\R^{2n+1} - S_1)$.
\end{proof}

Now we are prepared to prove the main result.  The proof splits naturally into two separate arguments, which we record as the following two lemmas.

\begin{lemma}
\label{lem:bu}
Suppose that there exists a unit $n$-sphere fibration of an open subset $E \subset \R^d$.  Then $d \geq 2n+1$, and if $d=2n+1$, then the spheres are pairwise linked.
\end{lemma}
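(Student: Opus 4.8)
The plan is to study the fibration in a tubular neighborhood of a single fiber, to encode the disjointness and space‑filling conditions as a nonsingularity statement for the ``pushoff'' to nearby fibers, and then to read off both the dimension bound and (in the critical dimension) the linking from this data via Lemma \ref{lem:linked}. Throughout I may assume $E$ is connected, by passing to a component.

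First I fix a fiber $S_0$, with center $c_0$, affine $(n+1)$-plane $P_0$ of direction $V_0$, and bounding disk $D_0\subset P_0$. Because $E$ is open it contains a full neighborhood of $S_0$ in $\R^d$, and continuity of the centers and normal spaces trivializes the fibration there. The normal bundle $\nu$ of $S_0$ in $\R^d$ is trivial of rank $d-n$ and splits at each $x\in S_0$ as $\nu_x=\R e_x\oplus V_0^\perp$, where $e_x=x-c_0$ is the outward radial direction and $V_0^\perp$ is normal to $P_0$. I write the fibers near $S_0$ as normal graphs $S_t=\{x+N_t(x):x\in S_0\}$, where $t$ ranges over a $(d-n)$-disk of fiber parameters and $N_t$ is a section of $\nu$; disjointness of $S_t$ from $S_0$ makes $N_t$ nowhere zero for $t\neq0$. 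The key structural input, which I expect to follow from local triviality and transversality of the fibration to the normal slice, is that since the fibers fill a neighborhood of $S_0$, for each fixed $x$ the map $t\mapsto N_t(x)$ is a local homeomorphism onto a neighborhood of $0$ in $\nu_x$. Differentiating at $t=0$ then yields a single $(d-n)$-dimensional space $L$ of infinitesimal unit-sphere motions $(\delta c,\delta V)\in\R^d\oplus\operatorname{Hom}(V_0,V_0^\perp)$ whose induced map $\Pi_x\colon L\to\nu_x$, $\Pi_x(\delta c,\delta V)=\langle\delta c,e_x\rangle e_x\oplus\bigl(\delta c^{\perp}+\delta V\,e_x\bigr)$, is an isomorphism for every $x\in S_0$.

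For the lower bound I analyze the $V_0^\perp$-component $\beta_x\colon L\to V_0^\perp$ of $\Pi_x$. Since $\Pi_x$ is an isomorphism and $\dim V_0^\perp=d-n-1$, the kernel $\lambda_x=\ker\beta_x$ is a line on which the radial component $\langle\delta c,e_x\rangle$ is nonzero, so $x\mapsto\lambda_x$ is a continuous map $S^n\to\mathbb{P}(L)\cong\RP^{d-n-1}$ subject to the nonvanishing constraint $\langle\delta c,e_x\rangle\neq0$ on $\lambda_x$. A constant line is impossible, since a fixed nonzero vector of $V_0$ is orthogonal to some $e_x$; more generally this constraint forces the family $x\mapsto\lambda_x$ to be essential, which I expect to require the target to have dimension at least $n$, i.e. $d-n-1\geq n$, giving $d\geq2n+1$. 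This is the same nonsingularity phenomenon that, in the borderline case, matches the existence of skew fibrations (cf. Remark \ref{rem:skew}).

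Now suppose $d=2n+1$, so $\dim\nu_x=n+1$ and each nowhere-zero field defines a map $N_t\colon S^n\to\R^{n+1}\setminus\{0\}\simeq S^n$ with a well-defined degree. Lemma \ref{lem:linked} says $S_0$ and $S_t$ are linked precisely when $S_t$ meets $P_0$ once inside and once outside $S_0$; since $S_t\cap P_0$ is the locus where the $V_0^\perp$-component of $N_t$ vanishes, and ``inside versus outside'' is recorded by the sign of $\langle N_t(x),e_x\rangle$ there, unwinding this identifies $\operatorname{lk}(S_0,S_t)$ with $\deg N_t$ (the signed count of $x$ at which $N_t(x)$ points inward radially). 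As every other fiber is disjoint from $S_0$, the linking number is locally constant in $t$; the base is a connected $(n+1)$-manifold with $n+1\geq2$, so deleting $[S_0]$ keeps it connected and $\operatorname{lk}(S_0,S_t)\equiv\ell$ is a single integer. The case $n=0$ is immediate. It then remains to prove $\ell\neq0$, and this is the step I expect to be the main obstacle: a nowhere-zero section of a trivial bundle can have degree zero, so the nonvanishing cannot be seen locally from a single nearby fiber and must be extracted from the global hypothesis that the whole $(d-n)$-parameter family is simultaneously pairwise disjoint and space-filling. Concretely I would argue by contradiction, combining $\deg N_t=0$ with the isomorphism property of $\Pi_x$ for every $x$—which forces the inward radial direction to be attained in the family—to contradict either pairwise disjointness or filling near $S_0$. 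Converting the simultaneous nonsingularity of the maps $\Pi_x$ into the statement $\deg N_t=\pm1$ is the crux; once it is established, every pair of fibers is linked, and together with $d\geq2n+1$ this proves the lemma.
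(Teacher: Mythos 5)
There are genuine gaps here, and they occur at all three load-bearing points of your outline. First, your entire framework rests on ``differentiating at $t=0$'' to produce the space $L$ of infinitesimal motions and the isomorphisms $\Pi_x$, but the hypothesis is only that the fibration is \emph{continuous} (the centers and normal spaces vary continuously); nothing licenses taking derivatives of $t\mapsto N_t(x)$, and the ``key structural input'' that each $t\mapsto N_t(x)$ is a local homeomorphism onto a neighborhood of $0$ in $\nu_x$ is itself only asserted. Second, even granting smoothness, the lower bound is not derived: you say the constraint forces $x\mapsto\lambda_x$ to be essential and that you ``expect'' this to require $d-n-1\geq n$, but no argument is given. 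The line field $\lambda_x$ can be oriented by the sign of the radial component to give a map $S^n\to S^{d-n-1}$, but that map has no evident equivariance (note $\beta_{-x}$ involves $\delta V e_{-x}=-\delta V e_x$, so $\lambda_{-x}$ is not related to $\lambda_x$ in any obvious way), so neither Borsuk--Ulam nor a degree argument applies to it as stated. Third, for the linking claim you explicitly concede that proving $\deg N_t=\pm1$ (equivalently $\ell\neq 0$) is ``the crux'' and leave it open; but that \emph{is} the content of the second half of the lemma, so the proposal does not prove it.

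The paper's proof sidesteps all of this with one purely metric Borsuk--Ulam argument requiring no differentiability. Take the $n$-sphere $S\subset P$ concentric with the fiber $C$ and of radius $1+\varepsilon$, and map each $x\in S$ to the point where its fiber meets the boundary $\partial D$ of a small transverse $(d-n)$-disk at $p\in C$ (a fattening argument replaces $S$ by a topological sphere $S'$ to make this well defined). If $d\leq 2n$ then $\partial D\simeq S^{d-n-1}$ with $d-n-1<n$, so Borsuk--Ulam yields antipodal $x,-x$ on the \emph{same} fiber; since $|x-(-x)|=2+2\varepsilon>2$ exceeds the diameter of a unit $n$-sphere, this is a contradiction. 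This is the geometric input your infinitesimal linearization discards: the obstruction comes from antipodal points being farther than $2$ apart, not from nonsingularity of a derivative. For $d=2n+1$, if the nearby fibers were unlinked with $C$, Lemma \ref{lem:linked} forces them to miss the open ball $B$ bounded by $C$, so the map to $\partial D\simeq S^n$ misses the point $\partial D\cap B$ and factors through $\R^n$ --- and the same Borsuk--Ulam contradiction applies. That single observation replaces your entire unproved degree computation. If you want to salvage your approach, you would need both a genuinely continuous (non-infinitesimal) formulation of the pushoff and a proof that the resulting map is essential; as written, the proposal establishes neither the inequality $d\geq 2n+1$ nor the linking.
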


The final statement of Lemma \ref{lem:bu}, in the case $n=1$, formalizes the intuitive idea that a $1$-parameter family of unit circles cannot ``pass through'' another unit circle in $\R^3$.

\begin{lemma}
\label{lem:skew}
Suppose that there exists a fibration of an open connected subset $E \subset \R^{2n+1}$ by unit, linked, $n$-spheres. Then $n \in \left\{ 1, 3, 7 \right\}$.
\end{lemma}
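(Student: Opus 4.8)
The plan is to convert the linked unit $n$-sphere fibration into a skew fibration of an open subset of $\R^{2n+1}$ by affine $n$-planes, and then invoke the Hurwitz--Radon constraint recorded in Remark \ref{rem:skew}. To each fiber sphere $S$ I associate its \emph{axis}: if $S$ has center $c$ and spans the affine $(n+1)$-plane $P = c + V$, where $V$ is the $(n+1)$-dimensional direction space, let $N = V^\perp$ be the $n$-dimensional orthogonal complement and set $\nu_S := c + N$, the affine $n$-plane through the center orthogonal to $P$. Since the fibration is continuous, the assignment $S \mapsto (c, N)$ is continuous on the $(n+1)$-dimensional space $B$ of fibers. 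I would show that the family $\{\nu_S\}_{S}$ is pairwise skew and fibers an open region, producing the desired skew fibration.

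First I would establish transversality of the directions. For two distinct (hence linked) fibers $S_1, S_2$, Lemma \ref{lem:linked} forces $S_2$ to meet $P_1$ in exactly two points, one in each component of $P_1 - S_1$. Since $S_2 \subset P_2$, these points lie in $S_2 \cap (P_1 \cap P_2)$, which is the intersection of the unit sphere $S_2$ with the affine flat $P_1 \cap P_2$ inside $P_2$; being exactly a $0$-sphere, that flat must be a line, so $\dim(V_1 \cap V_2) = 1$. Hence $V_1 + V_2 = \R^{2n+1}$, and taking orthogonal complements gives $N_1 \cap N_2 = (V_1 + V_2)^\perp = 0$; the planes $\nu_{S_1}, \nu_{S_2}$ therefore have transverse directions.

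The key geometric step is disjointness of the axes, and here I would use that every point $x \in \nu_S$ is equidistant from all of $S$: if $q \in S$ then $q - c \in V$ and $x - c \in V^\perp$, so Pythagoras gives $|x - q|^2 = |x - c|^2 + 1 =: R_S^2$, independent of $q$. Suppose $x \in \nu_{S_1} \cap \nu_{S_2}$. For $y \in P_1$ one computes $|x - y|^2 = (R_1^2 - 1) + |c_1 - y|^2$, so points of $P_1$ interior to $S_1$ lie at distance $< R_1$ from $x$ and exterior points at distance $> R_1$. But by Lemma \ref{lem:linked} the sphere $S_2$ meets $P_1$ once inside $S_1$ and once outside, and both of these points lie on $S_2$, hence at distance exactly $R_2$ from $x$; this forces $R_2 < R_1$ and $R_2 > R_1$ simultaneously, a contradiction. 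Thus $\nu_{S_1} \cap \nu_{S_2} = \emptyset$, and together with the previous step the axes are pairwise skew. Continuity of $S \mapsto (c, N)$ then makes the map $\Phi\colon (S, v) \mapsto c + v$, for $v \in N$, a continuous injection (injectivity being precisely the disjointness just proved) from the total space of the rank-$n$ bundle over $B$, a $(2n+1)$-manifold, into $\R^{2n+1}$; by invariance of domain $\Phi$ embeds it onto an open set fibered by the pairwise skew planes $\nu_S$.

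It remains to extract the dimension restriction. Choosing a Grassmann chart records the directions $N_S$ as graphs of linear maps $A_S \in \mathrm{Hom}(\R^n, \R^{n+1})$ whose pairwise differences are injective, which is exactly the data of a skew fibration by $n$-planes; by the correspondence of Remark \ref{rem:skew} this forces $n + 1 = \rho(n+1)$, hence $n \in \{0,1,3,7\}$, and the value $n = 0$ is excluded because the range $n + 2 \le d \le 2n+1$ of Lemma \ref{lem:linked} is empty unless $n \ge 1$, so unit $0$-spheres cannot be linked. The step I expect to be the main obstacle is this last one: ensuring that a skew fibration of \emph{merely an open subset} of $\R^{2n+1}$, rather than all of it, already carries the full Hurwitz--Radon obstruction. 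This is where connectedness of $E$ enters, allowing the directions $N_S$ and their full-rank difference maps to be organized in a single consistent framework so that the nonexistence result for skew fibrations can be applied.
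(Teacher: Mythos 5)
Your proposal follows the same route as the paper: associate to each fiber the affine $n$-plane through its center orthogonal to its containing $(n+1)$-plane, show that linkedness forces these axes to be pairwise skew, verify continuity, and invoke the Hurwitz--Radon restriction on skew fibrations from Remark \ref{rem:skew} (including the same reliance on that obstruction holding for fibrations of an open subset rather than all of $\R^{2n+1}$). The only real difference is internal to the skewness step: your equidistance/Pythagoras argument for disjointness of the axes is a clean direct alternative to the paper's contrapositive argument, which shows that intersecting axes force $F_1$ and $F_2$ to meet the common line $P_1 \cap P_2$ in sets symmetric about the same point, contradicting the linking criterion of Lemma \ref{lem:linked}.
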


We prove the first lemma with a Borsuk--Ulam construction.  For the second lemma we develop and apply a correspondence between unit sphere fibrations and skew fibrations.

\begin{proof}[Proof of Lemma \ref{lem:bu}]
Let $p$ be a point in the interior of $E$.  We use the following notation: $C$ is the unit $n$-sphere fiber through $p$, $B$ is the open $(n+1)$-ball whose boundary is $C$, $P$ is the $(n+1)$-plane containing $C$,  and $D \subset E$ is a small $(d-n)$-dimensional disk centered at $p$, orthogonal to the tangent space to $C$ at $p$, and small enough such that every fiber intersecting $D$ does so transversely and exactly once; see Figure \ref{fig:local}.

\begin{figure}[ht!]
\centerline{
\includegraphics[width=2.5in]{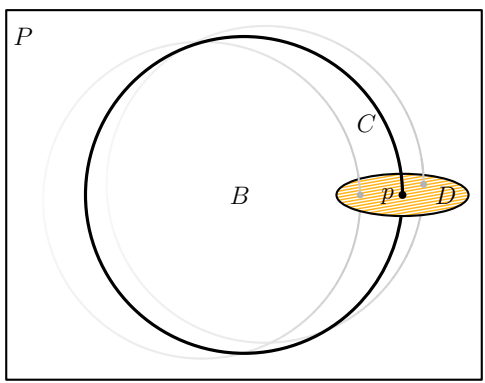}
}
\caption{Fibration by unit circles: local depiction}
\label{fig:local}
\end{figure}

By compactness of $C$ and openness of $E$, there exists $\varepsilon > 0$ such that the $n$-sphere $S \subset P$ concentric with $C$ and of radius $1 + \varepsilon$ lies inside $E$.  Suppose momentarily that for each $x \in S$, the fiber passing through $x$ intersects the boundary $\partial D$ of the small transverse disk at $p$.    This allows us to define $f \colon S \to \partial D \simeq S^{d-n-1}$ as the map which sends $x \in S$ to the intersection of the fiber through $x$ with $\partial D$.

If $d \leq 2n$, i.e.\ if $d-n -1 < n$, then by the Borsuk--Ulam theorem, there exists $x$ such that $f(x) = f(-x)$, and so the fibers through $x$ and $-x$ intersect.  Since $x$ and $-x$ are distance $2 + 2\varepsilon$ apart, their containing fibers are different and hence do not intersect, a contradiction.

Now let $d = 2n+1$.  Since linkedness is preserved by homotopy, there is a dichotomy: all fibers passing through $\partial D$ are linked with $C$, or all fibers are unlinked with $C$.  We assume all fibers are unlinked and derive a contradiction.  Since $C$ is unlinked with the fiber through $x \in S$, and since $x \in S$ lies in the exterior of $C \subset P$, then by Lemma \ref{lem:linked}, the fiber through $x$ does not intersect $B$.  In particular, the image of $f$ misses the point $\partial D \cap B$.   Therefore $f$ may be considered as a map $S^n \to \R^n$, and the Borsuk--Ulam theorem applies in the same way.

Now we address the assumption that for each $x \in S$, the fiber passing through $x$ intersects the boundary $\partial D$ of the small transverse disk at $p$.  This need not occur, and so we modify the definition of $S$ as follows.  Since $D$ is chosen to correspond to a local trivialization, the fibers passing through $D$ generate a foliated neighborhood $U \subset E$ of the fiber $C$, and the fibers passing through $\partial D$ form a fibered topological torus $T = S^{d-n-1} \times S^n$ which serves as the boundary of $U$.  For $q \in C$,  following fibers from $\partial D$ yields a homeomorphism between $\partial D$ and the boundary of a topological $(d-n)$-dimensional disk $D_q$ which is transverse to $C$ at $q$ (explicitly, $D_q$ is the intersection of $U$ with the $(d-n)$-plane orthogonal to the tangent space $T_qC$).   Now let $x$ be the point on the line segment $D_q \cap P$ which is farthest from the center $c$ of $C$.   By compactness of $\partial D_q$, this line segment intersects $\partial D_q$ in finitely many points, and the line segment intersects $\partial D_q$ at least once on each side of $q$, since $C$ lies in the interior connected component of $\R^d - T$.  Therefore $x$ is well-defined, and moreover it varies continuously with $q$.   Said differently, if $\delta_q$ represents the maximum distance from $D_q \cap P$ to $c$, then $x = c + \delta_q q$.  The map $q \mapsto c + \delta_q q = x$ is a homeomorphism from $C$ onto its image $S^\prime$, and the proof goes through with this topological $n$-sphere $S^\prime$ in place of the geometric sphere~$S$.
\end{proof}

\begin{remark} It is tempting to apply Borsuk--Ulam to some natural map on the $n$-sphere $\partial D$, instead of on the topological $n$-sphere $S^\prime$ obtained by fattening the fiber $C$, but we were unable to define a map which simultaneously captures both the unit assumption and the unlinkedness assumption.  It would be interesting to know if the same obstruction could be found by such a map.
\end{remark}

\begin{proof}[Proof of Lemma \ref{lem:skew}]
We show for $n>0$ that the fibration of $E$ induces a fibration of some open set $U \subset \R^{2n+1}$ by skew affine copies of $\R^n$, which is known to only exist when $n \in \left\{1,3,7\right\}$ (see Remark \ref{rem:skew}).

For each spherical fiber $F$, consider the containing $(n+1)$-plane $P$, and then consider the orthogonal $n$-plane $Q$ passing through the center of $F$.  Let $Q_1$ and $Q_2$ be two such $n$-planes obtained from fibers $F_1$ and $F_2$ with corresponding planes $P_1$ and $P_2$.  We claim that if $Q_1$ and $Q_2$ are not skew, then the fibers $F_1$ and $F_2$ are unlinked.

Suppose first that $Q_1$ and $Q_2$ share a common line $\ell$, i.e.\ their linear representatives span fewer than $2n$ dimensions.  Then $\ell$ is orthogonal to both $P_1$ and $P_2$, so $P_1$ and $P_2$ do not span a $(2n+1)$-dimensional space.  If $P_1$ and $P_2$ do not intersect, then $F_1$ and $F_2$ are not linked.  If $P_1$ and $P_2$ intersect, then $F_1$ and $F_2$ both lie in some $2n$-dimensional space, hence are not linked by Lemma \ref{lem:linked}.

Suppose now that the linear representatives of $Q_1$ and $Q_2$ span a $2n$-dimensional space, but that $Q_1$ and $Q_2$ intersect.  Then the vector $u$ connecting the centers of $F_1$ and $F_2$ lies in the linear span of $Q_1$ and $Q_2$.  Moreover, $P_1$ and $P_2$ span a $(2n+1)$-dimensional space and intersect in some line $\ell$, and $\ell$ is orthogonal to $u$.  To verify that $F_1$ and $F_2$ are unlinked, it suffices to understand the intersections of $F_1 \cap P_2$ and $F_2 \cap P_1$.  If either intersection is empty, the spheres are unlinked.  Each nonempty intersection occurs within $P_1 \cap P_2 = \ell$ and must consist of one or two points.

Let $x_i$ be the point on $\ell$ nearest to the center of $F_i$.   Note that $x_i$ can be obtained by taking the intersection of $\ell$ with the copy of the hyperplane orthogonal to $\ell$ passing through the center of $F_i$.  Since $\ell$ is orthogonal to $u$, the centers of $F_1$ and $F_2$ each lie on the same such hyperplane, and therefore $x_1 = x_2$.

Now the intersection of each $F_i$ with $\ell$ is symmetric about $x_1$.  Therefore $F_1$ cannot intersect $P_2$ both outside and inside $F_2$, so the spheres are not linked by Lemma \ref{lem:linked}.

It remains to observe that continuity of the unit sphere fibration leads to continuity of the induced skew covering, and hence yields a bona fide fibration.  Indeed, the skew plane corresponding to a point $p \in E$ is defined by taking the affine plane through the center $c(p)$ and with linear representative $\nu(p) \in \operatorname{Gr}_{n}(2n+1)$, each of which vary continuously with $p$.  Hence the collection of affine planes varies continuously with $p$.
\end{proof}

\section{Example}
\label{sec:ex}

In this section we construct an example of a unit $n$-sphere fibration of $\R^{2n+1}$ for $n \in \left\{ 1, 3, 7 \right\}$.  The construction was inspired by the relationship between unit sphere fibrations and skew fibrations exhibited in the proof of Lemma \ref{lem:skew}.

\begin{example}
We construct an example of a unit sphere fibration for $n = 3$.   The same construction works for $n=1$ and $n=7$ with complex numbers or octonions in place of quaternions.   Write $\R^7 = \R^4 \times \R^3$ and for each $(y,0) \in \R^4 \times \R^3$, let $P_y$ be the  $3$-plane spanned by $(iy,1,0,0)$, $(jy,0,1,0)$, $(ky, 0, 0, 1)$, where $\left\{ i, j, k \right\}$ represents a basis of the imaginary unit quaternions.  Let $Q_y$ be the $4$-plane which passes through $y$ and is orthogonal to $P_y$, and let $S_y$ be the unit $3$-sphere in $Q_y$ centered at $y$.  Now the center map $y \mapsto (y,0) \in \R^7$ and the normal map $y \mapsto Q_y$ are continuous, and therefore the assignment $y \mapsto S_y$ defines a continuous unit sphere fibration provided that no two spheres intersect.

We will show that for distinct $y,z$ with $|y|,|z| < 1$, $S_y$ and $S_z$ do not intersect.

First note that for distinct $y$ and $z$, the linear span of $P_y$ and $P_z$ is $6$-dimensional.  Therefore the affine $4$-planes $Q_y$ and $Q_z$ intersect in precisely a line $\ell$, and it is enough to check that $S_y \cap \ell$ and $S_z \cap \ell$ do not intersect.   Note that $\ell$ contains the origin, and it is easy to check, by taking inner products with the basis vectors of $P_y$ and $P_z$, that $\ell$ has direction
\[
v = (y-z,- \langle iy, z \rangle,  - \langle jy,z \rangle,  - \langle ky, z \rangle).
\]
For future convenience we compute
\begin{align}
\label{eqn:vnorm}
|v|^2 = |y-z|^2 + \langle iy, z \rangle^2 + \langle jy,z \rangle^2 + \langle ky, z \rangle^2 = |y-z|^2 + |z|^2|y|^2 - \langle y,z \rangle^2,
\end{align}
where we have used the Parseval identity together with the fact that $\left\{ y, iy, jy, ky \right\}$ forms an orthogonal basis of $\R^4$.
The nearest point on $\ell$ to the point $y$ is
\[
c_y = \frac{ \langle y,  v \rangle }{|v|^2} v = \frac{\langle y, y-z \rangle}{|v|^2} v,
\]
and the squared distance from $y$ to $c_y$ is
\[
d_y^2 = \langle y - c_y, y - c_y \rangle = |y|^2 - 2\langle y, c_y \rangle + |c_y|^2 = |y|^2 - \frac{\langle y, y-z \rangle^2}{|v|^2}.
\]
Now the intersection points of $S_y$ with $\ell$ are the points $c_y \pm r_y \frac{v}{|v|}$, where $r_y = \sqrt{1 - d_y^2}$.  Let $d$ represent the distance from $c_y$ to $c_z$.   Then
\[
d = |c_y - c_z| = \frac{|y-z|^2}{|v|}.
\]
To check that $S_y$ and $S_z$ do not intersect, we must show that $c_y \pm r_y\frac{v}{|v|} \neq c_z \pm r_z\frac{v}{|v|}$.  For this, it suffices to show that $d < r_y + r_z < \min\left\{ 2r_y + d, 2r_z + d \right\}$; see Figure \ref{fig:ineq}.

\begin{figure}[ht!]
\centerline{
\includegraphics[width=3in]{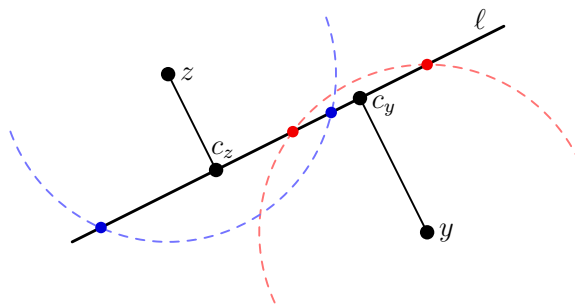}
}
\caption{Two nonintersecting spheres centered at $y$ and $z$; the red points represent $c_y \pm r_y\frac{v}{|v|}$, and the blue points represent $c_z \pm r_z\frac{v}{|v|}$}
\label{fig:ineq}
\end{figure}

\begin{lemma}
In the notation above, we have
\begin{compactenum}[(a)]
\item $d < r_y + r_z$,
\item $r_z < r_y + d$,
\item $r_y < r_z + d$.
\end{compactenum}
\end{lemma}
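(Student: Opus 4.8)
The plan is to read off from the formulas already derived that (a)--(c) are precisely the three strict triangle inequalities for the side lengths $d$, $r_y$, $r_z$; that is, they assert that these three numbers form a nondegenerate triangle. Writing $s=|y|^2$, $t=|z|^2$, $p=\langle y,z\rangle$, $m=|y-z|^2$ and $\Delta=|y|^2|z|^2-\langle y,z\rangle^2$, recall that $|v|^2=m+\Delta$, and a short computation from the stated values of $d_y^2$, $r_y$, $d$ gives the clean closed forms
\[
r_y^2=\frac{m-s\Delta}{m+\Delta},\qquad r_z^2=\frac{m-t\Delta}{m+\Delta},\qquad d^2=\frac{m^2}{m+\Delta}.
\]
Setting $A=r_y^2|v|^2=m-s\Delta$, $B=r_z^2|v|^2=m-t\Delta$, $C=d^2|v|^2=m^2$, Heron's formula shows that the conjunction of (a), (b), (c) is equivalent to the single polynomial inequality $\widetilde H>0$, where $\widetilde H:=2AB+2BC+2CA-A^2-B^2-C^2=16\,(\mathrm{Area})^2\cdot|v|^4$ simplifies to
\[
\widetilde H=m^3(4-m)-2m^2(s+t)\Delta-(s-t)^2\Delta^2 .
\]
Thus the whole lemma reduces to proving $\widetilde H>0$ on the region $0\le s,t<1$, $p^2\le st$ (equivalently $\Delta\ge0$), $m>0$.

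Before attacking $\widetilde H$, I would record the transparent geometric proof of (a) alone, both as a sanity check and because it fixes the correct sign of the configuration. The key point is that $\ell$ passes through the origin, which lies at distance $|y|<1$ from the center of $S_y$ and $|z|<1$ from the center of $S_z$; hence it is an interior point of both unit balls and so lies strictly between the endpoints of each chord $S_y\cap\ell$, $S_z\cap\ell$. Measuring along $\ell$, this says $|\mathbf 0-c_y|<r_y$ and $|\mathbf 0-c_z|<r_z$, so by the triangle inequality on $\ell$,
\[
d=|c_y-c_z|\le|c_y-\mathbf 0|+|\mathbf 0-c_z|<r_y+r_z,
\]
which is exactly (a).

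For (b) and (c) the origin no longer suffices: these are the ``outer'' triangle inequalities, asserting that neither chord contains the other, and they genuinely require the polynomial estimate. By the symmetry $y\leftrightarrow z$ (which swaps (b) and (c) and interchanges $s,t$) I may assume $s\ge t$; then $r_y^2-r_z^2=\tfrac{(t-s)\Delta}{|v|^2}\le0$, so $r_y\le r_z$ and (c) is immediate, while (b) unwinds, after isolating the one square root and squaring, to $\widetilde H>0$ once more. The enabling algebraic identities are
\[
sm-\Delta=\langle y,y-z\rangle^2\ge0,\qquad tm-\Delta=\langle z,y-z\rangle^2\ge0,
\]
which simultaneously guarantee $A,B>0$ (hence $r_y,r_z>0$) and serve as the main tool for bounding $\Delta$ against $m$. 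In any case, since $\widetilde H$ is symmetric and equivalent to all three inequalities at once, establishing $\widetilde H>0$ finishes every part.

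The main obstacle is the inequality $\widetilde H>0$ itself, and its difficulty is that it is tight: one checks that $\widetilde H\equiv0$ along the boundary stratum $s=t=1$ (for every value of $p$), so strictness must be extracted from $s,t<1$, and no lossy bound on $\Delta$ will survive. The tools I expect to use are the two identities above together with the bound $|v|^2=m+\Delta\le(|y|+|z|)^2<4$ (in particular $m<4$, so the leading term $m^3(4-m)$ is positive); substituting the exact relation $4\Delta=2m(s+t)-m^2-(s-t)^2$ turns $\widetilde H$ into a polynomial in $m$ and $s,t$ that I would show is positive by completing squares in the variables $1-s,\,1-t>0$, the linearization at $s=t=1$ already pointing strictly into the region (its directional derivatives there equal $8m^3(8-m)>0$). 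This final positivity argument is the only genuinely computational part of the proof.
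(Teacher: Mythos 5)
Your reduction is set up correctly and is genuinely different from the paper's route: the closed forms $r_y^2=\frac{m-s\Delta}{m+\Delta}$, $r_z^2=\frac{m-t\Delta}{m+\Delta}$, $d^2=\frac{m^2}{m+\Delta}$ all check out, the Heron-type identity $\widetilde H=m^3(4-m)-2m^2(s+t)\Delta-(s-t)^2\Delta^2$ is verified by direct expansion, and the logical bookkeeping is sound (since $r_y,r_z,d>0$, at most one of the three triangle defects can be negative, so once (a) is proved geometrically and (c) is disposed of by the $s\ge t$ symmetry, part (b) is indeed equivalent to $\widetilde H>0$). Your geometric proof of (a) -- the origin lies on $\ell$ and strictly inside both unit balls, hence strictly inside both chords -- is complete and is essentially the paper's argument in disguise: the paper's two square-root bounds are exactly the signed statements $\langle y,y-z\rangle< r_y|v|$ and $-\langle z,y-z\rangle< r_z|v|$, summed. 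Your handling of (c) via $r_y\le r_z$ when $s\ge t$ is a clean improvement over the paper's ``(c) is similar.''

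The genuine gap is that the inequality $\widetilde H>0$, which by your own reduction carries the entire content of part (b), is never proved. You observe (correctly) that $\widetilde H\equiv 0$ on the stratum $s=t=1$, that crude bounds on $\Delta$ cannot work, and that the inward derivative at that stratum is positive -- but a positive normal derivative at a boundary face where the function vanishes does not give positivity on the open region, and ``substitute $4\Delta=2m(s+t)-m^2-(s-t)^2$ and complete squares in $1-s$, $1-t$'' is a plan, not an argument; no squares are actually completed and no certificate of positivity is exhibited. This is precisely the step the paper spends its longest computation on (the chain of inequalities in $a=|y|$, $b=|z|$, $c=\cos\theta$, which at the key moment trades $2ab$ for $ab(1+2c-c^2)$ and then $ab(1-c^2)$ for $a^4b^2(1-c^2)$ to absorb the tightness at $a=b=1$). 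Until you carry out the claimed square-completion -- or an equivalent SOS/certificate argument for $\widetilde H$ on the region $0\le s,t<1$, $\Delta\ge 0$, $m>0$ -- the proof of (b) is incomplete.
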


\begin{proof}
The inequality in part (a) can be written explicitly
\[
\frac{|y-z|^2}{|v|} < \sqrt{1 - |y|^2 + \frac{\langle y, y-z \rangle^2}{|v|^2}} + \sqrt{1 - |z|^2 + \frac{\langle z, y-z \rangle^2}{|v|^2}},
\]
or
\begin{align}
\label{eqn:lema}
|y-z|^2 < \sqrt{|v|^2 - |v|^2|y^2| + \langle y, y - z \rangle^2} + \sqrt{|v|^2 - |v|^2|z^2| + \langle z, y - z \rangle^2}.
\end{align}
Since $|y| < 1$, we have
\[
0 < |v|^2-|v|^2|y|^2,
\]
therefore
\[
\langle y, y-z \rangle^2 < |v|^2 - |v|^2|y^2| + \langle y, y - z \rangle^2,
\]
and so
\begin{align}
\label{eqn:yin}
\langle y, y-z \rangle < \sqrt{|v|^2 - |v|^2|y^2| + \langle y, y - z \rangle^2}.
\end{align}
Similarly,
\begin{align}
\label{eqn:zin}
-\langle z, y-z \rangle < \sqrt{|v|^2 - |v|^2|z^2| + \langle z, y - z \rangle^2}.
\end{align}
Now summing (\ref{eqn:yin}) and (\ref{eqn:zin}) yields (\ref{eqn:lema}).

The inequality in part (b) can be written explicitly 
\[
\sqrt{1 - |z|^2 + \frac{\langle z, y-z \rangle^2}{|v|^2}} < \sqrt{1 - |y|^2 + \frac{\langle y, y-z \rangle^2}{|v|^2}} + \frac{|y-z|^2}{|v|}.
\]
Multiplying by $|v|$ and using (\ref{eqn:vnorm}) yields
\[
\sqrt{|y-z|^2 - |z|^2(|y|^2|z|^2 - \langle y, z \rangle^2)} < \sqrt{|y-z|^2 - |y|^2(|y|^2|z|^2 - \langle y, z \rangle^2)} + |y-z|^2.
\]
Squaring both sides and rearranging, we obtain the inequality
\begin{align}
\label{eqn:lemb}
(|y|^2-|z|^2)(|y|^2|z|^2 - \langle y, z \rangle^2) < 2\sqrt{|y-z|^2 - |y|^2(|y|^2|z|^2 - \langle y, z \rangle^2)}|y-z|^2 + |y-z|^4.
\end{align}
For convenience let $a = |y|$, $b=|z|$ and $c = \cos \theta$, where $\theta$ is the angle between $y$ and $z$.  We write
\begin{align*}
0 & \geq -(a-bc)^2-(ac-b)^2 \\ & = -a^2-b^2-a^2c^2-b^2c^2+4abc \\
& = (a^2+b^2)(1-c^2) -2(a^2-2abc+b^2)\\ & \geq (a^3b^2+a^2b^3)(1-c^2)-2(a^2-2abc+b^2), \hspace{.2in} \mbox{ since } a,b<1.
\end{align*}
Now we may assume $a > b$, since otherwise (\ref{eqn:lemb}) is immediate.  Then multiplying the previous inequality by $a-b$ yields:
\begin{align*}
0 & \geq (a-b)(a^3b^2+a^2b^3)(1-c^2)-2(a-b)(a^2-2abc+b^2) \\
& = (a^4b^2-a^2b^4)(1-c^2) - 2(a-b)(a^2-2abc+b^2).
\end{align*}
Therefore
\begin{align*}
(a^4b^2-a^2b^4)(1-c^2)& \leq 2(a-b)(a^2-2abc+b^2) \\ &  = 2(a-b)|y-z|^2 \\ & = 2\sqrt{a^2-2ab+b^2}|y-z|^2 \\ & \leq 2\sqrt{a^2-ab(1+2c-c^2)+b^2}|y-z|^2, \hspace{.5in} \mbox{since } -1 \leq c \leq 1,
\\ & = 2\sqrt{a^2 - 2abc + b^2 -ab(1-c^2)}|y-z|^2
\\ & \leq 2\sqrt{a^2 - 2abc + b^2 -a^4b^2(1-c^2)}|y-z|^2
\\ & = 2\sqrt{|y-z|^2 -a^4b^2(1-c^2)}|y-z|^2
\\ & < 2\sqrt{|y-z|^2 -a^4b^2(1-c^2)}|y-z|^2 + |y-z|^4,
\end{align*}
and this is (\ref{eqn:lemb}).

Part (c) is similar.
\end{proof}


\end{example}

\section{History}

Questions regarding the possibility of fibering an open set in $\R^3$ by unit circles were raised by the first named author (DA) as a graduate student at UC Berkeley circa 1970.  No one questioned had answers, but in the early 1990s DA discovered the fibering of the toroidal region depicted in Figure \ref{fig:unitcirclefib} (he called this region a ``bialy'' in reference to the bagel-like cuisine).  He gave a series of talks on his findings, which were published in a 1993 technical report ``Hoops in $\R^3$,'' though he later retracted this report after discovering an error.

In 2019 the other three authors (OA) were introduced to this type of problem at Carnegie Mellon, when colleague Anton Bernshteyn posed to them the existence question for Borel coverings of $\R^3$ by unit circles.   OA became interested in the continuous case, and a literature search resulted in only a single finding: a 1995 blog post by Evelyn Sander, which she wrote to summarize a 1994 talk by DA at UIUC.   Her post mentioned the announced results of DA, but OA were unable to extract any formal proofs, though through this post they did learn the importance of linkedness and the potential relationship to division algebras.  They were unable to access the technical report nor find any contact information for DA. 

OA later discovered that a mutual colleague had contact information for DA.  OA shared a draft of this article and learned that DA also knew (though had not published) proofs of some of these results.  This led to the present collaboration, which we hope provides some closure to a 50 year old question -- though of course many interesting questions remain!

\bibliographystyle{plain}
\bibliography{circlesbib}{}

\begin{thebibliography}{10}

\bibitem{Bankston}
P.~Bankston and R.~Fox.
\newblock Topological partitions of euclidean space by spheres.
\newblock {\em Amer. Math. Monthly}, 92(6):423--424, 1985.

\bibitem{BankstonMcGovern}
P.~Bankston and R.~McGovern.
\newblock Topological partitions.
\newblock {\em General Topol. Appl.}, 10(3):215--229, 1979.

\bibitem{Bass}
C.~Bass.
\newblock Some special decompositions of ${E}^3$.
\newblock {\em Trans. Amer. Math. Soc.}, 216:115--130, 1976.

\bibitem{Bing5}
RH~Bing.
\newblock Some monotone decompositions of a cube.
\newblock {\em Ann. Math.}, 61.

\bibitem{Bing4}
RH~Bing.
\newblock Partially continuous decompositions.
\newblock {\em Proc. Amer. Math. Soc.}, 6(1):124--133, 1955.

\bibitem{Bing2}
RH~Bing.
\newblock A decomposition of ${E}^3$ into points and tame arcs such that the
  decomposition space is topologically different from ${E}^3$.
\newblock {\em Ann. Math.}, 65:484--500, 1957.

\bibitem{Bing1}
RH~Bing.
\newblock Upper semicontinuous decompositions of ${E}^3$.
\newblock {\em Ann. Math.}, 65:363--374, 1957.

\bibitem{Bing3}
RH~Bing and ML~Curtis.
\newblock Imbedding decompositions of ${E}^3$ in ${E}^4$.
\newblock {\em Proc. Amer. Math. Soc.}, 11(1):149--155, 1960.

\bibitem{Cobb}
J.~Cobb.
\newblock Nice decompositions of $\mathbb{R}^n$ entirely into nice sets are
  mostly impossible.
\newblock {\em Geom. Dedicata}, 62(1):107--114, 1996.

\bibitem{Conway}
JH~Conway and HT~Croft.
\newblock Covering a sphere with congruent great-circle arcs.
\newblock In {\em Math. Proc. Cambridge Philos. Soc.}, volume~60, pages
  787--800. Cambridge University Press, 1964.

\bibitem{Gardner}
M.~Gardner.
\newblock Mathematical games.
\newblock {\em Scientific Amer.}, 229(2):176--183, 1973.

\bibitem{GluckWarner}
H.~Gluck and F.~Warner.
\newblock Great circle fibrations of the three-sphere.
\newblock {\em Duke Math. J.}, 50:107--132, 1983.

\bibitem{GluckWarnerYang}
H.~Gluck, F.~Warner, and C.~Yang.
\newblock Division algebras, fibrations of spheres by great spheres and the
  topological determination of a space by the gross behavior of its geodesics.
\newblock {\em Duke Math. J.}, 50:1041--1076, 1983.

\bibitem{GluckWarnerZiller}
H.~Gluck, F.~Warner, and W.~Ziller.
\newblock The geometry of the {H}opf fibrations.
\newblock {\em Enseign. Math.(2)}, 32(3-4):173--198, 1986.

\bibitem{Harrison}
M.~Harrison.
\newblock Skew flat fibrations.
\newblock {\em Math. Z.}, 282:203--221, 2016.

\bibitem{Harrison2}
M.~Harrison.
\newblock Contact structures induced by skew fibrations of $\mathbb{R}^3$.
\newblock {\em Bull. Lond. Math. Soc.}, 51:887--899, 2019.

\bibitem{Harrison4}
M.~Harrison.
\newblock Skew and sphere fibrations.
\newblock {\em arxiv.org/abs/2203.16412}, 2022.

\bibitem{JonssonWastlund}
M.~Jonsson and J.~W{\"a}stlund.
\newblock Partitions of $\mathbb{R}^3$ into curves.
\newblock {\em Math. Scand.}, 83:192--204, 1998.

\bibitem{Komjath}
P.~Komj{\'a}th.
\newblock Set theoretic constructions in euclidean spaces.
\newblock In {\em New trends in discrete and computational geometry}, pages
  303--325. Springer, 1993.

\bibitem{of4}
MathOverflow.
\newblock Can $\mathbb{R}^3$ be expressed as a disjoint union of pairwise
  linked circles?
\newblock Available at:
  \url{https://mathoverflow.net/questions/173387/can-r3-be-expressed-as-a-disjoint-union-of-pairwise-linked-circles}.
\newblock Accessed: 2022-09-12.

\bibitem{of5}
MathOverflow.
\newblock Concerning proofs from the axiom of choice that $\mathbb{R}^3$ admits
  surprising geometrical decompositions: Can we prove there is no borel
  decomposition?
\newblock Available at:
  \url{https://mathoverflow.net/questions/93601/concerning-proofs-from-the-axiom-of-choice-that-R3-admits-surprising-geometrical}.
\newblock Accessed: 2022-09-12.

\bibitem{of3}
MathOverflow.
\newblock Covering the space by disjoint unit circles.
\newblock Available at:
  \url{https://mathoverflow.net/questions/162324/covering-the-space-by-disjoint-unit-circles}.
\newblock Accessed: 2022-09-12.

\bibitem{of2}
MathOverflow.
\newblock Filling $\mathbb{R}^3$ with skew lines.
\newblock Available at:
  \url{https://mathoverflow.net/questions/92919/filling-mathbbr3-with-skew-lines}.
\newblock Accessed: 2022-09-12.

\bibitem{of1}
MathOverflow.
\newblock Is it possible to partition $\mathbb{R}^3$ into unit circles?
\newblock Available at:
  \url{https://mathoverflow.net/questions/28647/is-it-possible-to-partition-mathbb-r3-into-unit-circles}.
\newblock Accessed: 2022-09-12.

\bibitem{McKay}
B.~McKay.
\newblock The {Blaschke} conjecture and great circle fibrations of spheres.
\newblock {\em Amer. J. Math.}, 126:1155--1191, 2004.

\bibitem{Moussong}
G.~Moussong and N~Sim{\'a}nyi.
\newblock Circle decompositions of surfaces.
\newblock {\em Topology Appl.}, 158(3):392--396, 2011.

\bibitem{OvsienkoTabachnikov}
V.~Ovsienko and S.~Tabachnikov.
\newblock On fibrations with flat fibres.
\newblock {\em Bull. Lond. Math. Soc.}, 45:625--632, 2013.

\bibitem{OvsienkoTabachnikov2}
V.~Ovsienko and S.~Tabachnikov.
\newblock Hopf fibrations and {H}urwitz-{R}adon numbers.
\newblock {\em Math. Intelligencer}, 38(4):11--18, 2016.

\bibitem{Szulkin}
A.~Szulkin.
\newblock R3 is the union of disjoint circles.
\newblock {\em Amer. Math. Monthly}, 90(9):640--641, 1983.

\bibitem{Vogt89}
E.~Vogt.
\newblock A foliation of $\mathbb{R}^3$ and other punctured 3-manifolds by
  circles.
\newblock {\em Publ. Math. IHES}, 69:215--232, 1989.

\bibitem{Wilker}
JB~Wilker.
\newblock Tiling $\mathbb{R}^3$ with circles and disks.
\newblock {\em Geom. Dedicata}, 32(2):203--209, 1989.

\end{thebibliography}

\end{document}